\numberwithin{equation}{section}
\numberwithin{figure}{section}
  \theoremstyle{plain}
  \newtheorem*{thm*}{\protect\theoremname}
  \theoremstyle{remark}
  \newtheorem*{acknowledgement*}{\protect\acknowledgementname}
\theoremstyle{plain}
\newtheorem{thm}{\protect\theoremname}[section]
  \theoremstyle{plain}
  \newtheorem{lem}[thm]{\protect\lemmaname}
  \theoremstyle{plain}
  \newtheorem{prop}[thm]{\protect\propositionname}
  \theoremstyle{remark}
  \newtheorem*{rem*}{\protect\remarkname}
  \theoremstyle{definition}
  \newtheorem{problem}[thm]{\protect\problemname}
\newcommand{\e}{\mathrm e}
\newcommand{\N}{\mathbb{N}}
\newcommand{\Z}{\mathbb{Z}}
\newcommand{\R}{\mathbb{R}}
\renewcommand{\Pi}{\pi}
\renewcommand{\hat}{\widehat}
\DeclareMathOperator*{\card}{card}
  \providecommand{\acknowledgementname}{Acknowledgement}
  \providecommand{\lemmaname}{Lemma}
  \providecommand{\problemname}{Problem}
  \providecommand{\propositionname}{Proposition}
  \providecommand{\remarkname}{Remark}
  \providecommand{\theoremname}{Theorem}
\providecommand{\theoremname}{Theorem}
\begin{document}

\title{A Lower Bound for the Exponent of Convergence of Normal Subgroups
of Kleinian Groups}

\author{Johannes Jaerisch}

\thanks{The author was supported by the research fellowship JA 2145/1-1 of
the German Research Foundation (DFG)}

\address{Department of Mathematics, Graduate School of Science Osaka University,
1-1 Machikaneyama Toyonaka, Osaka, 560-0043 Japan }

\email{jaerisch@cr.math.sci.osaka-u.ac.jp}

\subjclass[2000]{Primary 30F40; Secondary 37F30. }

\keywords{Kleinian groups, exponent of convergence, normal subgroups, hyperbolic
geometry}
\begin{abstract}
We give a short new proof that for each non-elementary
Kleinian group $\Gamma$, the exponent of convergence of an arbitrary
non-trivial normal subgroup is bounded below by half of the exponent
of convergence of $\Gamma$, and that strict inequality holds if $\Gamma$
is of divergence type. 
\end{abstract}
\maketitle

\section{Introduction and Statement of Results}

An $\left(n+1\right)$-dimensional hyperbolic  manifold, for $n\in\N$,  can be described by
the Poincar\'{e} disc model $\mathbb{D}:=\left\{ z\in\R^{n+1}:\Vert z\Vert<1\right\} $
of hyperbolic $\left(n+1\right)$-space quotiented by the 
action of a Kleinian group $\Gamma$, where we recall that a Kleinian group is a discrete subgroup
of the group of all isometries of $\mathbb{D}$ with respect to the
hyperbolic metric $d$. For each Kleinian group $\Gamma$, we can canonically associate the Poincar\'{e}
series $P\left(\Gamma,s\right):=\sum_{\gamma\in\Gamma}e^{-sd\left(0,\gamma\left(0\right)\right)}$
for each $s\in\R$. The exponent of convergence\emph{
}$\delta\left(\Gamma\right)$ of a Kleinian group $\Gamma$ is given
by the abscissa of convergence of the associated Poincar\'{e} series,
that is $\delta\left(\Gamma\right):=\inf\left\{ s\ge0:P\left(\Gamma,s\right)<\infty\right\} $. Furthermore, 
$\Gamma$ is said to be of divergence type if the series $P\left(\Gamma,\delta\left(\Gamma\right)\right)$
diverges. The quantity $\delta\left(\Gamma\right)$ carries information
about the complexity of the action of $\Gamma$ at the boundary at
infinity $\mathbb{S}:=\left\{ z\in\R^{n+1}:\Vert z\Vert=1\right\} $
of hyperbolic space. For instance, by a theorem of Bishop and Jones
(\cite{MR1484767}), it is known that $\delta\left(\Gamma\right)$
is equal to the Hausdorff dimension of the radial limit set of $\Gamma$. For more details
on Kleinian groups, limit sets and the associated hyperbolic manifolds
we refer to \cite{MR698777,MR959135,MR1041575,MR1638795,MR2191250}.

It is of interest to study how the exponents of convergence
$\delta(\hat{\Gamma})$ and $\delta\left(\Gamma\right)$ are related
for a non-trivial normal subgroup $\hat{\Gamma}$ of a Kleinian group
$\Gamma$. Each normal subgroup $\hat{\Gamma}$ corresponds to a hyperbolic
manifold $\hat{M}=\mathbb{D}/\hat{\Gamma}$ which is a normal covering
of $M=\mathbb{D}/\Gamma$. In \cite{MR783536} it is shown that if
$\Gamma$ is convex cocompact and $\delta\left(\Gamma\right)>n/2$,
then $\delta(\hat{\Gamma})=\delta\left(\Gamma\right)$ if and only
if the quotient group $\Gamma/\hat{\Gamma}$ is amenable. Recently, this result was extended to normal subgroups of essentially free Kleinian groups $\Gamma$ 
with arbitrary exponent of convergence $\delta\left(\Gamma\right)$ in \cite{Stadlbauer11}. For a recent account of the interplay between the
exponent of convergence, the Hausdorff dimension of the limit set
and the convex core entropy of Kleinian groups we refer to \cite{FalkMatsuzaki11}.

In this note we give a short new proof of the following theorem which
states that the exponent of convergence $\delta(\hat{\Gamma})$ of
a non-trivial normal subgroup $\hat{\Gamma}$ of a non-elementary
Kleinian group $\Gamma$ is bounded below by $\delta\left(\Gamma\right)/2$. This theorem
complements the results on the coincidence of $\delta(\hat{\Gamma})$
and $\delta\left(\Gamma\right)$. Before stating the result, recall that a Kleinian group is
called non-elementary if its limit set consists of more than two (and hence uncountably many) elements.
\begin{thm}\label{mainthm} 
Let $\hat{\Gamma}$ be a non-trivial normal subgroup of a non-elementary
Kleinian group $\Gamma$. We then have $\delta(\hat{\Gamma})\ge\delta\left(\Gamma\right)/2$.
If $\Gamma$ is of divergence type then we have that $\delta(\hat{\Gamma})>\delta\left(\Gamma\right)/2$.
\end{thm}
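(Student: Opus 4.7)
The strategy is to exploit conjugation: for every $\phi\in\hat\Gamma$ and $\gamma\in\Gamma$, the conjugate $\gamma\phi\gamma^{-1}$ lies in $\hat\Gamma$. The plan is to fix a single convenient $\phi$, enumerate its conjugacy class via cosets of its centralizer in $\Gamma$, and bound the Poincar\'{e} series $P(\hat\Gamma,s)$ from below by the resulting family of distinct elements of $\hat\Gamma$.

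First I pick $\phi$. Because $\hat\Gamma$ is a non-trivial normal subgroup of the non-elementary group $\Gamma$, it is itself non-elementary, hence contains a hyperbolic element $\phi$. Let $C:=C_{\Gamma}(\phi)$ be its centralizer in $\Gamma$. Since $C$ pointwise fixes the two-point boundary fixed set of $\phi$, it is an elementary subgroup of $\Gamma$: one has $\delta(C)=0$ and the orbital counting function $|C\cap\{\gamma:d(0,\gamma(0))\le R\}|$ grows at most linearly in $R$. I then fix a set of coset representatives $\{\gamma_{i}\}_{i\in I}$ for $\Gamma/C$, each chosen to minimize $r_{i}:=d(0,\gamma_{i}(0))$ within its coset.

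The assignment $i\mapsto\gamma_{i}\phi\gamma_{i}^{-1}$ is injective into $\hat\Gamma$, and the triangle inequality at the origin gives
\[
d\bigl(0,\gamma_{i}\phi\gamma_{i}^{-1}(0)\bigr)\,=\,d\bigl(\gamma_{i}^{-1}(0),\phi\gamma_{i}^{-1}(0)\bigr)\,\le\,2r_{i}+d(0,\phi(0)),
\]
so
\[
P(\hat\Gamma,s)\,\ge\,e^{-sd(0,\phi(0))}\sum_{i\in I}e^{-2sr_{i}}.
\]
The key algebraic lemma is that $\sum_{i}e^{-tr_{i}}$ has the same abscissa of convergence as $P(\Gamma,t)$, namely $\delta(\Gamma)$. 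To prove this, I decompose $P(\Gamma,t)=\sum_{i,c}e^{-td(0,\gamma_{i}c(0))}$ and, using the minimality of the $\gamma_{i}$ together with the linear growth of $C$, bound the inner sum by $(1+r_{i})e^{-tr_{i}}$ up to a multiplicative constant. The polynomial factor $(1+r_{i})$ can be absorbed by an arbitrarily small loss in the exponent, so divergence of $P(\Gamma,t)$ for $t<\delta(\Gamma)$ forces $\sum_{i}e^{-tr_{i}}=\infty$ in the same range. Combined with the previous display this yields $P(\hat\Gamma,s)=\infty$ for every $s<\delta(\Gamma)/2$, and hence $\delta(\hat\Gamma)\ge\delta(\Gamma)/2$.

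The strict inequality in the divergence-type case is what I expect to be the main obstacle. If $\Gamma$ is of divergence type then $P(\Gamma,\delta(\Gamma))=\infty$, and the argument above already yields $P(\hat\Gamma,\delta(\Gamma)/2)=\infty$; by itself this only shows that $\hat\Gamma$ is of divergence type at $\delta(\Gamma)/2$, not that the critical exponent is strictly larger. Since the basic conjugation estimate $d(0,\gamma\phi\gamma^{-1}(0))\le 2d(0,\gamma(0))+O(1)$ is essentially sharp for most $\gamma$, one cannot tighten the basic inequality directly. The natural way forward is to enlarge the family of witnesses in $\hat\Gamma$, for instance by adjoining conjugates of the powers $\phi^{n}$ or products of several distinct conjugates $\gamma_{i}\phi^{\pm1}\gamma_{i}^{-1}$, and to exploit the divergence of $P(\Gamma,\delta(\Gamma))$ to force the enlarged Poincar\'{e} sum to diverge for some $s$ strictly larger than $\delta(\Gamma)/2$. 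Controlling the injectivity needed for this enlargement and extracting an $\varepsilon$-improvement in the exponent from the divergence hypothesis is where I anticipate the real technical weight of the argument to lie.
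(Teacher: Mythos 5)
Your treatment of the first assertion is essentially the paper's argument: you conjugate a fixed hyperbolic element $\phi\in\hat{\Gamma}$ by coset representatives, apply the triangle inequality $d(0,\gamma\phi\gamma^{-1}(0))\le 2d(0,\gamma(0))+d(0,\phi(0))$, and reduce everything to the statement that the relative Poincar\'e series over cosets has the same abscissa of convergence as $P(\Gamma,\cdot)$. The paper does exactly this, with two minor differences. First, it uses right cosets of the cyclic group $\left\langle h\right\rangle$ rather than of the full centraliser, so its conjugation map is only uniformly finite-to-one (Lemma \ref{lem:finite-to-one}); your use of the centraliser, which gives genuine injectivity, is a small simplification. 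Second, its coset-counting step (Lemma \ref{lem:relativepoincare}) uses orthogonal projection onto the axis of $h$ and the hyperbolic law of cosines to bound each coset's contribution by a constant times $\e^{-s d(0,[g](0))}$, whereas you settle for a bound with an extra factor $(1+r_i)$; that polynomial loss is harmless for the abscissa of convergence, and your sketch (minimality of the representative plus linear orbital growth of the elementary group $C$) can be made rigorous, though you would need to spell out the linear growth and the splitting of the inner sum. So the first assertion is in order.

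The second assertion is where your proposal has a genuine gap, and you correctly flag it yourself. The missing idea is not a sharpening of the conjugation estimate nor an enlargement of the family of witnesses in $\hat{\Gamma}$; it is an external rigidity theorem. The paper invokes the result of Matsuzaki and Yabuki (Proposition \ref{prop:divergencetype-same-exponent}): if a Kleinian group of divergence type sits as a normal subgroup inside a Kleinian group $\Gamma$, then the two groups have the same exponent of convergence. Granting this, strict inequality follows by a short contradiction from precisely the inequality you already proved: suppose $\delta(\hat{\Gamma})=\delta(\Gamma)/2$; since $\Gamma$ is of divergence type, the bound $P(\Gamma,s)\le \mathrm{const}(s)\cdot P(\hat{\Gamma},s/2)$ evaluated at $s=\delta(\Gamma)$ gives $P(\hat{\Gamma},\delta(\hat{\Gamma}))=\infty$, so $\hat{\Gamma}$ is itself of divergence type; the rigidity theorem then forces $\delta(\hat{\Gamma})=\delta(\Gamma)$, whence $\delta(\Gamma)=\delta(\Gamma)/2$ and so $\delta(\Gamma)=0$, contradicting the fact that every non-elementary Kleinian group has positive exponent of convergence. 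Your proposed alternative route --- adjoining conjugates of powers $\phi^{n}$ or products of several conjugates to extract an $\varepsilon$-improvement --- is not carried out, and nothing in your setup indicates it can be; in the paper the divergence hypothesis enters only through the Patterson--Sullivan uniqueness underlying Proposition \ref{prop:divergencetype-same-exponent}, not through any quantitative refinement of the counting argument.
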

The first assertion of Theorem \ref{mainthm} was  obtained by Falk and Stratmann (\cite[Theorem 2]{MR2097162})
using a result of Matsuzaki (\cite[Theorem 6]{MR2038133}). The complete statement of  Theorem \ref{mainthm} was  proved by Roblin (\cite{MR2166367}). 

The novelty in our proof of Theorem \ref{mainthm} is to show that there exists a uniformly finite-to-one map from a factor of the group $\Gamma$ to the normal subgroup $\hat{\Gamma}$ (see Lemma \ref{lem:finite-to-one}). Combining this with elementary
hyperbolic geometry,  the first assertion of  Theorem  \ref{mainthm} follows. For
the second assertion, we additionally make use of Proposition
\ref{prop:divergencetype-same-exponent}, which is taken from \cite[Corollary 4.3]{MR2486788}
and which is based on a uniqueness property of the Patterson-Sullivan
measure for Kleinian groups of divergence type. 

We remark that in certain special cases we are able to simplify the
proof of Theorem \ref{mainthm} even further by replacing the uniformly finite-to-one
map (see the definition prior to Lemma \ref{lem:finite-to-one}) by
a one-to-one map from $\Gamma$ to $\hat{\Gamma}$ (see Proposition
\ref{prop:one-to-one-using-malnormalsubgroups}). These special cases
include the case where $\Gamma$ is a free group of rank greater than
one and $\hat{\Gamma}$ is an arbitrary non-trivial normal subgroup
of $\Gamma$, as well as all the non-elementary Kleinian groups $\Gamma$
and non-trivial normal subgroups $\hat{\Gamma}$ such that $\hat{\Gamma}$
contains a free subgroup of rank two which is a malnormal subgroup
of $\Gamma$. We remark that regarding Fuchsian groups ($n=1$), all
torsion-free Fuchsian groups not corresponding to a closed surface
are free groups and are therefore covered by Proposition \ref{prop:one-to-one-using-malnormalsubgroups}.
Furthermore, by a result of \cite{MR1713122}, it is known that any
non-elementary subgroup of a torsion-free  hyperbolic group $\Gamma$
contains a free group of rank two which is malnormal in $\Gamma$.
Thus, the second special case of Proposition \ref{prop:one-to-one-using-malnormalsubgroups}
also covers all convex cocompact Kleinian groups $\Gamma$. Using
the concept of relative hyperbolicity it is possible to apply Proposition
\ref{prop:one-to-one-using-malnormalsubgroups} to all non-elementary
geometrically finite groups $\Gamma$. In the case of Fuchsian groups,
we have that geometrically finite groups coincide with finitely generated
groups, whereas in higher dimension we only have that geometrically
finite groups are finitely generated. However, in the case $n=2$,
we can use that for each finitely generated group $\Gamma$ there
exists a geometrically finite group $\Gamma'$ which is isomorphic
to $\Gamma$. Thus, Proposition \ref{prop:one-to-one-using-malnormalsubgroups}
is applicable to $\Gamma'$ and all its non-trivial normal subgroups.
Since the result of Proposition \ref{prop:one-to-one-using-malnormalsubgroups}
is in the setting of abstract groups, it also applies to all non-trivial
normal subgroups of all non-elementary finitely generated Kleinian
groups in dimension $n=2$. It would be interesting to know if Proposition
\ref{prop:one-to-one-using-malnormalsubgroups} can be extended to
all non-elementary Kleinian groups (see Problem \ref{problem}).

Recently, the results stated in Theorem \ref{mainthm} were partially obtained in \cite{MatsuTaylorTaylor11} in the special case that $\Gamma$ is convex cocompact. The proof given in there uses ergodicity of the geodesic flow on $\mathbb{D} / \Gamma$ and makes use of a result of Lundh (\cite{MR1992947}). In \cite{MatsuTaylorTaylor11},   it is also shown that the lower bound 
in Theorem \ref{mainthm} is sharp in the sense that for certain groups of the
first kind $\Gamma$,  there exists a sequence of normal subgroups $(\hat{\Gamma}_{n})_{n\in \N}$
of $\Gamma$ such that $\lim_{n\rightarrow \infty}\delta(\hat{\Gamma}_{n})=\delta\left(\Gamma\right)/2$. 

The existence  of a finite-to-one map from a factor of $\Gamma$ to the normal subgroup $\hat{\Gamma}$, which is employed in this paper, was also  used in the author's doctoral dissertation (\cite[Theorem 6.2.10]{JaerischDissertation11})
in order to obtain the first assertion of Theorem \ref{mainthm} for normal subgroups
of Fuchsian groups of Schottky type. \foreignlanguage{english}{Furthermore,
this idea was used to obtain the analogue of  Theorem  \ref{mainthm} in the context of} fractal models of normal subgroups
of Schottky groups in \cite[Theorem 1.2]{Jaerisch11}. Recently,  the author has extended Proposition \ref{prop:divergencetype-same-exponent}, which was vital in the proof of the second assertion of Theorem \ref{mainthm},  in terms of the thermodynamic formalism  of group-extended Markov systems (\cite{Jaerisch12b}).

Let us end this introduction with a brief discussion of further generalisations of Theorem \ref{mainthm} to    isometry groups of    $\mathrm{CAT}(-1)$ spaces and Gromov hyperbolic groups.   Matsuzaki and Yabuki have informed the author about  work in progress which  extends  Proposition \ref{prop:divergencetype-same-exponent} to isometry groups  acting properly discontinuously on a  proper $\mathrm{CAT}(-1)$ space, and  quasiconvex cocompact hyperbolic groups.  Provided that an extension of Proposition \ref{prop:divergencetype-same-exponent} holds, and that Proposition \ref{prop:one-to-one-using-malnormalsubgroups} applies, our proof of Theorem \ref{mainthm} can be  verbatim extended  to  isometry groups of a   $\mathrm{CAT}(-1)$ space and Gromov hyperbolic groups. As  an   example, consider the free group $\Gamma$ with at least two generators, which acts isometrically on its Cayley graph $(X,d)$ with respect to the word metric $d$. Then $(X,d)$ is a $\mathrm{CAT}(-1)$ space and Theorem \ref{mainthm} holds for each non-trivial normal subgroup of $\Gamma$. It would be interesting to know to what extent  Proposition \ref{prop:divergencetype-same-exponent} can be further generalised. 

\begin{acknowledgement*}
The author would like to thank Professor Matsuzaki for inviting him
to \linebreak Waseda University and for the warm hospitality during
his stay in Tokyo, where this paper was finished. The author is grateful
for fruitful discussion with Professor Matsuzaki on the subject of
this paper, bringing the relative Poincar\'{e} series and the concept
of malnormal subgroups to the author's attention.  The author thanks an anonymous referee for his valuable suggestions on an earlier version of this paper.
\end{acknowledgement*}

\section{Proof of the Theorem}

Let us begin by proving an elementary lemma which allows us to
investigate the Poincar\'{e} series in terms of a certain relative
Poincar\'{e} series. For the hyperbolic metric $d$ and $A\subset\mathbb{D}$
we set $d\left(0,A\right):=\inf_{x\in A}d\left(0,x\right)$. For a
subgroup $H$ of a group $G$ we denote by $H\backslash G$ the set
of right cosets $\left\{ Hg:g\in G\right\} $ and for each $g\in G$
we set $\left[g\right]:=Hg$.
\begin{lem}
\label{lem:relativepoincare}Let $\Gamma$ be a non-elementary Kleinian
group. For each hyperbolic element $h\in\Gamma$ and for each $s>0$
there exists a constant $C>0$ such that
\[
\sum_{\gamma\in\Gamma}\e^{-sd\left(0,\gamma\left(0\right)\right)}\le C\sum_{\left[g\right]\in\left\langle h\right\rangle \backslash\Gamma}\e^{-sd\left(0,\left[g\right]\left(0\right)\right)}.
\]
\end{lem}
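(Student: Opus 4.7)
The plan is to decompose the Poincar\'e series over right cosets of $\langle h\rangle$ in $\Gamma$ and prove a uniform per-coset bound. For each coset $[g]\in\langle h\rangle\backslash\Gamma$ choose a representative $g$ whose image $g(0)$ projects onto the axis $\alpha$ of $h$ within signed axis-distance $L/2$ of $\pi_\alpha(0)$, where $L:=\ell(h)>0$ is the translation length of $h$. This is always possible by replacing a given representative by $h^{k}g$ for a suitable $k\in\Z$, without changing the coset nor the inner sum $\sum_{n\in\Z}\e^{-sd(0,h^n g(0))}$. The lemma then reduces to the per-coset estimate
\[
\sum_{n\in\Z}\e^{-sd(0,h^n g(0))}\;\le\; C\,\e^{-sd(0,g(0))}
\]
with $C=C(h,s)>0$ independent of $[g]$, since $\e^{-sd(0,g(0))}\le \e^{-sd(0,[g](0))}$ by the definition of $d(0,[g](0))$.

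To prove the per-coset estimate I would use Fermi coordinates around $\alpha$. Writing $a:=d(0,\alpha)$, $r:=d(g(0),\alpha)$, $b\in[-L/2,L/2]$ for the signed axis-displacement of $\pi_\alpha(g(0))$ from $\pi_\alpha(0)$, and $\psi_n$ for the azimuthal angle at the axis between $0$ and $h^n g(0)$ (possibly depending on $n$ if $h$ has a rotational part), the hyperbolic distance formula reads
\[
\cosh d(0,h^n g(0)) \;=\; A\cosh(b+nL) - \sinh a\sinh r\cos\psi_n, \qquad A:=\cosh a\cosh r.
\]
Using $|\sinh a\sinh r\cos\psi_n|\le A$ one obtains the elementary bounds $\cosh d(0,h^n g(0))\ge 2A\sinh^{2}\!\big((b+nL)/2\big)$ and $\cosh d(0,g(0))\le 2A\cosh^{2}(b/2)$, in which the scale factor $A$ occurs symmetrically. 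Combined with $\log\cosh d\le d\le \log 2+\log\cosh d$, one obtains for $n\ne 0$
\[
\e^{-sd(0,h^n g(0))}\,\e^{sd(0,g(0))} \;\le\; 2^{s}\,\frac{\cosh^{2s}(b/2)}{\sinh^{2s}((b+nL)/2)}.
\]
Since $|b|\le L/2$ forces $|(b+nL)/2|\ge (2|n|-1)L/4$ for $|n|\ge 1$, summing over $n\in\Z$ yields a convergent series whose sum depends only on $s$ and $L$. This establishes the per-coset estimate, and summing over cosets completes the proof.

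The main obstacle is to obtain the per-coset bound uniformly in the geometric configuration of $g(0)$ relative to $\alpha$, particularly when $g(0)$ lies far from the axis (large $r$), so that $A=\cosh a\cosh r$ is itself large and the naive pointwise estimate $d(0,h^n g(0))-d(0,g(0))\ge L|n|-\mathrm{const}$ degenerates. The decisive observation is that $A$ appears as a common factor in both of the elementary bounds above and thus cancels in the ratio $\cosh d(0,g(0))/\cosh d(0,h^n g(0))$, leaving a one-dimensional series in $b$ and $nL$ whose summability is dictated solely by $|b|\le L/2$ and the translation length $L$.
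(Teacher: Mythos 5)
Your proof is correct and follows essentially the same route as the paper: decompose the Poincar\'e series over right cosets of $\left\langle h\right\rangle$, normalise the coset representative so that its projection onto the axis of $h$ lies within a fundamental interval of the translation, and establish a per-coset bound $\sum_{n\in\Z}\e^{-sd(0,h^{n}g(0))}\le C\e^{-sd(0,[g](0))}$ uniformly in the coset. The only difference is technical: the paper places the origin on the axis and uses the approximate additivity $d(0,x)+2\log 2\ge d(0,P)+d(P,x)$ for the orthogonal projection $P$, whereas you keep the origin arbitrary and get the same cancellation from the exact Fermi-coordinate distance formula; both are valid.
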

\begin{proof}
Without loss of generality we may assume that the origin is an element
of the axis $C\left(h\right)$ of the hyperbolic element $h$ which
joins the fixed points $p$ and $q$ of $h$. For each $\left[g\right]\in\left\langle h\right\rangle \backslash\Gamma$
we choose $g_{0}\in\left[g\right]$ such that the orthogonal projection
$P_{0}$ of $g_{0}\left(0\right)$ on $C\left(h\right)$ satisfies
$d\left(0,P_{0}\right)\le d\left(0,h\left(0\right)\right)$. We then
clearly have that
\[
d\left(0,\left[g\right]\left(0\right)\right)\le d\left(0,g_{0}\left(0\right)\right)\le d\left(0,h\left(0\right)\right)+d\left(P_{0},g_{0}\left(0\right)\right).
\]
Using this estimate, it follows for each $n\in\Z$ that the orthogonal
projection $P_{n}$ of $h^{n}g_{0}\left(0\right)$ on $C\left(h\right)$,
which is given by $P_{n}=h^{n}P_{0}$, satisfies
\begin{equation}
d\left(P_{n},h^{n}g_{0}\left(0\right)\right)=d\left(P_{0},g_{0}\left(0\right)\right)\ge d\left(0,\left[g\right]\left(0\right)\right)-d\left(0,h\left(0\right)\right).\label{eq:relative-poincare-1}
\end{equation}
By the First Law of Cosines (\cite[Theorem 3.5.3]{MR2249478} for
right-angled hyperbolic triangles we can estimate for each $n\in\Z$
that
\begin{equation}
d\left(0,h^{n}g_{0}\left(0\right)\right)+2\log2\ge d\left(0,P_{n}\right)+d\left(P_{n},h^{n}g_{0}\left(0\right)\right).\label{eq:relative-poincare-2}
\end{equation}
Combining the previous inequality with the fact that $d\left(0,P_{n}\right)\ge\left(\left|n\right|-1\right)d\left(0,h\left(0\right)\right)$
and using the inequality in (\ref{eq:relative-poincare-1}) we conclude
for each $n\in\Z$ that
\begin{eqnarray*}
d\left(0,h^{n}g_{0}\left(0\right)\right)+2\log2 & \ge & \left(\left|n\right|-1\right)d\left(0,h\left(0\right)\right)+d\left(0,\left[g\right]\left(0\right)\right)-d\left(0,h\left(0\right)\right)\\
 & = & \left(\left|n\right|-2\right)d\left(0,h\left(0\right)\right)+d\left(0,\left[g\right]\left(0\right)\right).
\end{eqnarray*}
Consequently, we obtain for each $s>0$ and for all $n\in\Z$ that
\[
\e^{-sd\left(0,h^{n}g_{0}\left(0\right)\right)}\le2^{-2s}\e^{-\left(\left|n\right|-2\right)d\left(0,h\left(0\right)\right)}\e^{-sd\left(0,\left[g\right]\left(0\right)\right)}.
\]
Summing the previous inequality over $n\in\Z$ shows that there exists
a constant $C>0$ depending on $s>0$ and $h\in\Gamma$ such that
for each \foreignlanguage{english}{$\left[g\right]\in\left\langle h\right\rangle \backslash\Gamma$
we have} \foreignlanguage{english}{
\begin{equation}
\sum_{\gamma\in\left[g\right]}\e^{-sd\left(0,\gamma\left(0\right)\right)}=\sum_{n\in\Z}\e^{-sd\left(0,h^{n}g_{0}\left(0\right)\right)}\le C\e^{-sd\left(0,\left[g\right]\left(0\right)\right)}.\label{eq:relative-poincare-3}
\end{equation}
Finally, summing over $\left[g\right]\in\left\langle h\right\rangle \backslash\Gamma$
in (\ref{eq:relative-poincare-3}) finishes the proof of the lemma.}
\end{proof}
For each normal subgroup $\hat{\Gamma}$ of $\Gamma$ and for each
$h\in\hat{\Gamma}$ we consider the map $\iota_{h}:\Gamma\rightarrow\hat{\Gamma}$
which is given by $\iota_{h}(g):= g^{-1}hg$, for each $g\in\Gamma$. Since
for each $n\in\Z$ and $g\in\Gamma$ we have $\iota_{h}\left(h^{n}g\right)=g^{-1}h^{-n}hh^{n}g=\iota_{h}\left(g\right)$,
this defines a map $\iota_{h}:\left\langle h\right\rangle \backslash\Gamma\rightarrow\hat{\Gamma}$.
The next lemma shows that this map is uniformly finite-to-one.
\begin{lem}
\label{lem:finite-to-one}Let $\hat{\Gamma}$ be a non-trivial normal
subgroup of a non-elementary Kleinian group $\Gamma$. For each hyperbolic
element $h\in\hat{\Gamma}$ there exists a constant $k\in\N$ such
that the map $\iota_{h}:\left\langle h\right\rangle \backslash\Gamma\rightarrow\hat{\Gamma}$
is at most $k$-to-one. \end{lem}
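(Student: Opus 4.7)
The plan is to study the fibres of $\iota_h$ via the centralizer $Z(h) := \{ g \in \Gamma : gh = hg \}$ of $h$ in $\Gamma$. Regarding $\iota_h$ first as the map $\Gamma \to \hat{\Gamma}$ given by $g \mapsto g^{-1} h g$, I would observe that $\iota_h(g_1) = \iota_h(g_2)$ if and only if $g_2 g_1^{-1} \in Z(h)$, since the equation $g_1^{-1} h g_1 = g_2^{-1} h g_2$ rearranges to $(g_2 g_1^{-1}) h = h (g_2 g_1^{-1})$. Consequently the preimage in $\Gamma$ of any $\hat{\gamma} \in \hat{\Gamma}$ is either empty or a single left coset $Z(h) g_0$. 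Passing to the quotient $\langle h \rangle \backslash \Gamma$, the fibre of $\iota_h$ over $\hat{\gamma}$ has cardinality at most $[Z(h) : \langle h \rangle]$, and this bound is uniform over $\hat{\Gamma}$. It therefore suffices to set $k := [Z(h) : \langle h \rangle]$ and show that this index is finite.

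To verify this, let $p, q \in \mathbb{S}$ be the two fixed points of $h$. Every $g \in Z(h)$ permutes $\{p, q\}$, because $g$ carries fixed points of $h$ to fixed points of $g h g^{-1} = h$. The index-at-most-two subgroup $Z_0(h) \subseteq Z(h)$ fixing $p$ and $q$ individually preserves the axis $C(h)$ setwise and orientation-preservingly, so every $g \in Z_0(h)$ acts on $C(h)$ by translation through some real distance $\tau(g)$. The assignment $\tau : Z_0(h) \to \R$ is a group homomorphism, and $\tau(h) \neq 0$.

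Next I would invoke the discreteness of $\Gamma$. The image $\tau(Z_0(h))$ is a non-trivial discrete subgroup of $\R$, hence an infinite cyclic group; and the kernel of $\tau$, consisting of isometries that fix $C(h)$ pointwise, is a discrete subgroup of the compact group of rotations about $C(h)$, and so is finite. Therefore $Z_0(h)$ is virtually infinite cyclic, and a routine commensurability argument then shows that the infinite cyclic subgroup $\langle h \rangle$ has finite index in $Z_0(h)$, hence in $Z(h)$.

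The main obstacle is the verification that $\tau$ has discrete image and finite kernel. Both facts ultimately rest on the discreteness of $\Gamma$ together with the decomposition of an isometry of hyperbolic space fixing $\{p, q\}$ individually into a translation along $C(h)$ and a rotation about $C(h)$; in higher dimensions the rotation component must be controlled by the compactness of the rotation group stabilizing a geodesic, which is the only nontrivial point in an otherwise elementary argument.
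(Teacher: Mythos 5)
Your proof is correct and follows essentially the same route as the paper: you identify the fibres of $\iota_{h}$ with cosets of the centraliser $Z(h)$ (the paper uses the slightly larger set-stabiliser $H$ of the fixed-point pair $\{p,q\}$ together with a pigeonhole count) and reduce the lemma to the finiteness of the index $[Z(h):\langle h\rangle]$. The only substantive difference is that where the paper cites the structure of elementary Kleinian groups with two limit points to obtain this finite index, you derive it directly from discreteness via the translation-length homomorphism along the axis of $h$; both arguments are sound.
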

\begin{proof}
Let $p$ and $q$ denote the fixed points of $h$ and let $H$ denote
the subgroup of $\Gamma$ which preserves the fixed points of $h$,
that is, $H:=\left\{ g\in\Gamma:g\left(\left\{ p,q\right\} \right)=\left\{ p,q\right\} \right\} $.
Since the limit set of $H$ is equal to $\left\{ p,q\right\} $, we
have that $H$ is an elementary group containing the hyperbolic element
$h$. Therefore, $H$ is a finite extension of the cyclic group \foreignlanguage{english}{$\left\langle h_{0}\right\rangle $}
generated by some hyperbolic element $h_{0}\in H$,  and there exists
$l\in\Z$ such that $h=h_{0}^{l}$. We conclude that $\left\langle h\right\rangle $
is a subgroup of $H$ with finite index $\left[H:\left\langle h\right\rangle \right]=k-1$,
for some $k\ge2$.

We now show that $\iota_{h}:\left\langle h\right\rangle \backslash\Gamma\rightarrow\hat{\Gamma}$
is at most $k$-to-one. Let $\left[g_{1}\right],\dots,\left[g_{k+1}\right]\in\left\langle h\right\rangle \backslash\Gamma$
be given such that $\iota_{h}\left(\left[g_{1}\right]\right)=\dots=\iota_{h}\left(\left[g_{k+1}\right]\right)$.
Then for each $j\in\left\{ 1,\dots,k\right\} $ we have that $g_{j}^{-1}hg_{j}=g_{k+1}^{-1}hg_{k+1}$,
which implies that $hg_{j}g_{k+1}^{-1}=g_{j}g_{k+1}^{-1}h$. Therefore,
\foreignlanguage{english}{for each} $j\in\left\{ 1,\dots,k\right\} $
we have that $g_{j}g_{k+1}^{-1}$ commutes with $h$, which implies
that the fixed points of $h$ are preserved by \foreignlanguage{english}{$g_{j}g_{k+1}^{-1}$.
Consequently, for each }$j\in\left\{ 1,\dots,k\right\} $\foreignlanguage{english}{
we deduce that }$g_{j}g_{k+1}^{-1}\in H$\foreignlanguage{english}{.
Since $\left[H:\left\langle h\right\rangle \right]=k-1$ it follows
by the }pigeonhole principle that there exist distinct integers $m,n\in\left\{ 1,\dots,k\right\} $
such that $\left[g_{m}g_{k+1}^{-1}\right]=\left[g_{n}g_{k+1}^{-1}\right]$
in $\left\langle h\right\rangle \backslash H$ and hence, $\left[g_{m}\right]=\left[g_{n}\right]$
in $\left\langle h\right\rangle \backslash\Gamma$. The proof is complete.
\end{proof}
For the sake of completeness we cite the following result from \cite[Corollary 4.3]{MR2486788}.
\begin{prop}
\label{prop:divergencetype-same-exponent}Let $\hat{\Gamma}$ denote
a Kleinian group of divergence type and let $\Gamma$ be a Kleinian
group which contains $\hat{\Gamma}$ as a normal subgroup. We then
have that $\delta(\hat{\Gamma})=\delta\left(\Gamma\right)$.\end{prop}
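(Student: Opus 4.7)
The inequality $\delta(\hat{\Gamma}) \le \delta(\Gamma)$ is immediate from the inclusion $\hat{\Gamma} \subseteq \Gamma$, since every term of the Poincar\'{e} series $P(\hat{\Gamma},s)$ appears in $P(\Gamma,s)$. The substance of the proposition therefore lies in establishing the reverse inequality $\delta(\Gamma) \le \delta(\hat{\Gamma})$.

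The plan is to exploit the uniqueness, up to a positive scalar, of the Patterson--Sullivan density at the critical exponent of a divergence type group. Set $\delta:=\delta(\hat{\Gamma})$ and let $(\mu_{x})_{x\in\mathbb{D}}$ be a nonzero $\delta$-conformal density for $\hat{\Gamma}$ supported on $L(\hat{\Gamma})$; such a density exists by the classical Patterson--Sullivan construction, and the divergence type hypothesis on $\hat{\Gamma}$ ensures that it is unique up to a positive scalar. For each $\gamma\in\Gamma$, the normality $\gamma\hat{\Gamma}\gamma^{-1}=\hat{\Gamma}$ together with a direct check shows that the pushforward family $(\gamma_{*}\mu_{\gamma^{-1}(x)})_{x\in\mathbb{D}}$ is again a $\delta$-conformal density for $\hat{\Gamma}$. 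Uniqueness then produces a scalar $c(\gamma)>0$ with
\[
\gamma_{*}\mu_{x}\;=\;c(\gamma)\,\mu_{\gamma(x)}\qquad(x\in\mathbb{D}),
\]
and the cocycle identity shows that $c:\Gamma\to\mathbb{R}_{>0}$ is a group homomorphism with $c|_{\hat{\Gamma}}\equiv 1$. Comparing total masses and invoking the standard bound $|\beta_{\xi}(\gamma(0),0)|\le d(0,\gamma(0))$ on the Busemann function yields the a priori estimate $|\log c(\gamma)|\le\delta\,d(0,\gamma(0))$.

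The main obstacle is to convert this $\Gamma$-quasi-invariance into the desired bound on $\delta(\Gamma)$. The cleanest route is to prove $c\equiv 1$, whereupon $(\mu_{x})$ becomes a bona fide $\delta$-conformal density for all of $\Gamma$; since $L(\hat{\Gamma})=L(\Gamma)$ for nontrivial normal subgroups of non-elementary Kleinian groups, the measure $\mu_{0}$ is supported on the limit set of $\Gamma$, and the classical upper bound for the critical exponent in terms of the dimension of the support of a conformal measure (equivalently, the Bishop--Jones theorem cited in the introduction) gives $\delta(\Gamma)\le\delta$, completing the proof. The triviality of $c$ should be forced using the ergodicity of the $\hat{\Gamma}$-action on $(L(\hat{\Gamma}),\mu_{0})$ --- yet another consequence of divergence type --- together with a Hopf-type ratio-ergodic argument, which rules out a nontrivial homomorphism $\Gamma/\hat{\Gamma}\to\mathbb{R}$ obeying the prescribed linear growth bound. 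An alternative that bypasses the triviality statement is to run a shadow-lemma estimate in which the cocycle $c$ is carried through the calculation; this is more technical but delivers the same conclusion.
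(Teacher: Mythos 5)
First, a point of comparison: the paper does not prove this proposition at all --- it is quoted verbatim from Matsuzaki and Yabuki \cite[Corollary 4.3]{MR2486788} ``for the sake of completeness'' --- so your attempt is really being measured against the argument of that cited source. Your overall strategy is indeed the one underlying the cited result: the trivial inequality $\delta(\hat{\Gamma})\le\delta(\Gamma)$, the uniqueness (up to scalar) of the $\delta(\hat{\Gamma})$-conformal density for a divergence-type group, the observation that normality turns each $\gamma\in\Gamma$ into a symmetry of that density up to a multiplier $c(\gamma)>0$, the homomorphism property of $c$ with $c|_{\hat{\Gamma}}\equiv1$, and the mass bound $|\log c(\gamma)|\le\delta\,d(0,\gamma(0))$ are all correct and correctly derived. (One small slip: the fact that a $\Gamma$-invariant $s$-conformal density forces $\delta(\Gamma)\le s$ is Sullivan's shadow-lemma estimate, not the Bishop--Jones theorem, which gives the opposite inequality for the radial limit set.)

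The genuine gap is the step $c\equiv1$, which you defer to ``ergodicity \dots together with a Hopf-type ratio-ergodic argument'' or to an unspecified shadow-lemma computation ``carrying $c$ through''. This step is not a routine consequence of ergodicity; it is the actual content of the Matsuzaki--Yabuki paper (their theorem on the absence of proper conjugation for divergence-type groups is precisely the control of this multiplier). Moreover, the proposed bypass provably does not deliver the conclusion. Running the shadow lemma with the quasi-invariant density gives, for the shadow $\mathcal{O}(\gamma)$ of a ball about $\gamma(0)$,
\[
\mu_{0}\bigl(\mathcal{O}(\gamma)\bigr)\asymp c(\gamma)^{-1}\e^{-\delta\, d\left(0,\gamma\left(0\right)\right)},
\]
and inserting the only available a priori bound $c(\gamma)^{-1}\ge\e^{-\delta d\left(0,\gamma\left(0\right)\right)}$ into the bounded-multiplicity covering argument yields merely $\sum_{\gamma\in\Gamma}\e^{-\left(2\delta+t\right)d\left(0,\gamma\left(0\right)\right)}<\infty$ for every $t>0$, i.e.\ $\delta(\Gamma)\le2\delta(\hat{\Gamma})$ --- which is the first assertion of Theorem \ref{mainthm}, not Proposition \ref{prop:divergencetype-same-exponent}. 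Since $c$ is a homomorphism into $(\R_{>0},\cdot)$, one has $c(\gamma^{n})=c(\gamma)^{n}$, so $c$ is bounded if and only if $c\equiv1$; hence any shadow argument strong enough to prove the proposition already presupposes the triviality you are trying to avoid. To close the gap you must actually prove $c\equiv1$ (e.g.\ via the conservativity and ergodicity of the $\hat{\Gamma}$-action on the square of the limit set supplied by the Hopf--Tsuji--Sullivan dichotomy, applied to the quasi-invariant Bowen--Margulis--Sullivan measure), and that argument is substantial enough that the present paper simply cites it.
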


We are now in the position to prove the main theorem.
\begin{proof}
[Proof of Theorem \ref{mainthm}]Since $\Gamma$ is non-elementary and $\hat{\Gamma}$
is a non-trivial normal subgroup of $\Gamma$, it follows that the limit sets of $\Gamma$ and $\hat{\Gamma}$
coincide. Hence, $\hat{\Gamma}$ is also non-elementary
(see e.g. \cite[Lemma 2.2]{MR1638795}). Fix a hyperbolic element
$h\in\hat{\Gamma}$ and let $\iota_{h}:\left\langle h\right\rangle \backslash\Gamma\rightarrow\hat{\Gamma}$
denote the map defined prior to Lemma \ref{lem:finite-to-one}, which
is at most $k$-to-one by Lemma \ref{lem:finite-to-one}, for some
$k\in\N$. By Lemma \ref{lem:relativepoincare} we have that,  for each $s>0$,  there exists a constant
$C(s)>0$ such that
\begin{equation}
\sum_{\gamma\in\Gamma}\e^{-sd\left(0,\gamma\left(0\right)\right)}\le C(s)\sum_{\left[g\right]\in\left\langle h\right\rangle \backslash\Gamma}\e^{-sd\left(0,\left[g\right]\left(0\right)\right)}.\label{eq:mainproof-1}
\end{equation}
An application of the triangle inequality shows that for each $g\in\Gamma$
we have that \foreignlanguage{english}{
\begin{eqnarray}
d\left(0,g^{-1}hg\left(0\right)\right) & \le & d\left(0,g^{-1}\left(0\right)\right)+d\left(g^{-1}\left(0\right),g^{-1}h\left(0\right)\right)+d\left(g^{-1}h\left(0\right),g^{-1}hg\left(0\right)\right)\label{eq:mainproof-1a}\\
 & = & d\left(0,g^{-1}\left(0\right)\right)+d\left(0,h\left(0\right)\right)+d\left(0,g\left(0\right)\right)=2d\left(g\left(0\right),0\right)+d\left(h\left(0\right),0\right).\nonumber
\end{eqnarray}
From this we deduce that} for each $\left[g\right]\in\left\langle h\right\rangle \backslash\Gamma$
we have
\begin{equation}
\e^{-d\left(0,\left[g\right]\left(0\right)\right)}\le\e^{d\left(0,h\left(0\right)\right)/2}\e^{-d\left(0,\iota_{h}\left(\left[g\right]\right)\left(0\right)\right)/2}.\label{eq:mainproof-2}
\end{equation}
Combining first (\ref{eq:mainproof-1}) and (\ref{eq:mainproof-2})
and then using that the map $\iota_{h}:\left\langle h\right\rangle \backslash\Gamma\rightarrow\hat{\Gamma}$
is at most $k$-to-one, we conclude that for each $s>0$, 
\begin{equation}
\sum_{\gamma\in\Gamma}\e^{-sd\left(0,\gamma\left(0\right)\right)}\le C(s)\e^{sd\left(0,h\left(0\right)\right)/2}\sum_{\left[g\right]\in\left\langle h\right\rangle \backslash\Gamma}\e^{-sd\left(0,\iota_{h}\left(\left[g\right]\right)\left(0\right)\right)/2}\le kC(s)\e^{sd\left(0,h\left(0\right)\right)/2}\sum_{\rho\in\hat{\Gamma}}\e^{-sd\left(0,\rho\left(0\right)\right)/2}.\label{eq:mainproof-3}
\end{equation}
We can now prove the first assertion of Theorem  \ref{mainthm}. For each $\epsilon>0$
we have that $\sum_{\gamma\in\Gamma}\e^{-\left(\delta\left(\Gamma\right)-\epsilon\right)d\left(0,\gamma\left(0\right)\right)}=\infty$.
Hence, it follows by (\ref{eq:mainproof-3}) that $\sum_{\rho\in\hat{\Gamma}}\e^{-\left(\delta\left(\Gamma\right)-\epsilon\right)d\left(0,\rho\left(0\right)\right)/2}=\infty$.
Since this holds for each $\epsilon>0$, we conclude that $\delta(\hat{\Gamma})\ge\delta\left(\Gamma\right)/2$.
For the proof of the second assertion let $\Gamma$ be of divergence
type and suppose by way of contradiction that $\delta(\hat{\Gamma})=\delta\left(\Gamma\right)/2$.
Then it follows by (\ref{eq:mainproof-3}) that $\hat{\Gamma}$ is
of divergence type. Since $\hat{\Gamma}$ is a normal subgroup of
$\Gamma$, it follows by Proposition \ref{prop:divergencetype-same-exponent}
that $\delta(\hat{\Gamma})=\delta\left(\Gamma\right)$. But from this
we deduce that $\delta\left(\Gamma\right)=\delta\left(\Gamma\right)/2$
which implies that $\delta\left(\Gamma\right)=0$. This gives the
desired contradiction, since it is well-known that for each non-elementary
group $\Gamma$ we have that $\delta\left(\Gamma\right)>0$ (see e.g.
\cite{MR0227402}). The proof is complete.
\end{proof}

\subsection{Special cases }

In certain special cases the following proposition can further simplify
the proof of Theorem \ref{mainthm}. The next proposition considers abstract
groups and might be of independent interest. Recall that a subgroup
$H$ of a group $G$ is called a malnormal subgroup of $G$ if for
each $g\in G$ with $g\notin H$ we have that $(gHg^{-1})\cap H=\left\{ 1\right\} $.
\begin{prop}
\label{prop:one-to-one-using-malnormalsubgroups}Let $\hat{\Gamma}$
be a non-trivial normal subgroup of a group $\Gamma$. If $\Gamma$
is free of rank greater than one, or if $\hat{\Gamma}$ contains a
free subgroup $H=\left\langle h_{1},h_{2}\right\rangle $ of rank
two which is a malnormal subgroup of $\Gamma$, then there exists
a finite set $F\subset\hat{\Gamma}$ and a map $\tau:\Gamma\rightarrow F$
such that the map $\iota:\Gamma\rightarrow\hat{\Gamma}$, given by
$\iota\left(g\right):=g\tau\left(g\right)g^{-1}$, is one-to-one.
\end{prop}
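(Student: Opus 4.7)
The plan is to construct $\iota$ by combining left coset representatives of $H$ in $\Gamma$ with a clever use of the free structure of $H = \langle h_1, h_2 \rangle$. Under the malnormality hypothesis, I would fix a set $T$ of left coset representatives of $H$ in $\Gamma$ with $1 \in T$, so every $g \in \Gamma$ has a unique factorisation $g = tk$ with $t \in T$ and $k \in H$. Taking $F := \{h_1, h_2\} \subset \hat{\Gamma}$, I would define $\tau(g) := \phi(k)$, where $\phi : H \to F$ reads off the last letter of the reduced word representing $k$ in $F(h_1, h_2)$: put $\phi(k) = h_1$ if $k$ is trivial or ends in $h_2^{\pm 1}$, and $\phi(k) = h_2$ if $k$ ends in $h_1^{\pm 1}$.

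The central technical step would be to show that the auxiliary map $\Psi : H \to H$, $\Psi(k) := k \phi(k) k^{-1}$, is injective. The definition of $\phi$ is rigged so that no cancellation occurs in the product $k \cdot \phi(k) \cdot k^{-1}$: in every case the last letter of $k$ is not the inverse of the first letter of $\phi(k)$, and, symmetrically, the last letter of $\phi(k)$ is not the inverse of the first letter of $k^{-1}$. Consequently $\Psi(k)$ is a reduced word in $H$ of length $2|k| + 1$ whose middle letter is $\phi(k)$, so reading the first half of the word recovers $k$ uniquely.

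To conclude that $\iota$ is one-to-one, I would take $g_i = t_i k_i$ with $\iota(g_1) = \iota(g_2)$ and set $u := t_2^{-1} t_1$. Since $\iota(g_i) = t_i \Psi(k_i) t_i^{-1}$, this rearranges to $u \Psi(k_1) u^{-1} = \Psi(k_2)$ in $\Gamma$. If $u \notin H$, the malnormality of $H$ yields $u H u^{-1} \cap H = \{1\}$, forcing $\Psi(k_2) = 1$, hence $\phi(k_2) = 1$, which contradicts $\phi(k_2) \in \{h_1, h_2\}$. Otherwise $u \in H$, so $t_1 = t_2 u \in t_2 H$, and the uniqueness of coset representatives forces $t_1 = t_2$ and $u = 1$; then $\Psi(k_1) = \Psi(k_2)$ and the injectivity of $\Psi$ gives $k_1 = k_2$, so $g_1 = g_2$.

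For the case in which $\Gamma$ is free of rank at least two, my plan is to reduce to the malnormal situation by producing a rank-two free subgroup $H \subset \hat{\Gamma}$ that is malnormal in $\Gamma$. Starting with a cyclically reduced element $h_0 \in \hat{\Gamma} \setminus \{1\}$ and using that maximal cyclic subgroups of a free group are malnormal, I would construct $H$ via a small-cancellation-type argument, using sufficiently high powers of $h_0$ together with well-chosen conjugates which remain in $\hat{\Gamma}$ by normality. The main obstacle is the design of $\phi$ and the accompanying no-cancellation analysis ensuring that $\Psi$ is injective; a secondary hurdle is the construction, in the free-group case, of a rank-two malnormal free subgroup of $\Gamma$ lying inside the prescribed normal subgroup.
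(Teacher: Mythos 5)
Your treatment of the malnormal case is correct and is essentially the paper's own argument: the same left coset decomposition of $\Gamma$ with respect to $H$, the same choice $F=\{h_1,h_2\}$, the same last-letter rule for $\tau$, and the same two-step conclusion (malnormality forces the coset parts to agree, and the no-cancellation form of $k\phi(k)k^{-1}$ as a reduced word of length $2|k|+1$ recovers $k$). The differences there are purely cosmetic.

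The free-group case is where you diverge from the paper, and where there is a genuine gap. You propose to reduce it to the malnormal case by producing a rank-two free subgroup $H\le\hat{\Gamma}$ that is malnormal in $\Gamma$, but you only sketch this construction (``small-cancellation-type argument, sufficiently high powers of $h_0$ together with well-chosen conjugates'') and explicitly flag it as an unresolved hurdle; as written, the reduction is not established. Such a subgroup does exist --- for non-elementary subgroups of torsion-free hyperbolic groups this is Kapovich's theorem \cite{MR1713122}, which the paper cites in its introduction for exactly this purpose --- but you neither invoke that result nor carry out the construction, and the assertion that powers and conjugates of a single $h_0$ generate a malnormal rank-two subgroup requires real work (arbitrary choices will in general fail to be malnormal). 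The paper avoids this entirely: in the free case it fixes $h_0\in\hat{\Gamma}\setminus\{1\}$, takes $F=\{\gamma_1^{\pm1}h_0\gamma_1^{\mp1},\gamma_2^{\pm1}h_0\gamma_2^{\mp1}\}$, and for each $g$ chooses a conjugating letter $\alpha\in\{\gamma_1^{\pm1},\gamma_2^{\pm1}\}$ avoiding the inverse of the last letter of $g$ and the relevant first and last letters of $h_0$, so that $g\alpha h_0\alpha^{-1}g^{-1}$ is already reduced and $g$ can be read off directly --- no auxiliary malnormal subgroup is needed. To complete your proof you should either adopt that direct argument or justify the reduction by citing \cite{MR1713122} (noting that a non-trivial normal subgroup of a free group of rank greater than one is non-elementary).
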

Before we turn to the proof of the proposition let us explain  how it
 can be  applied in order to simplify the proof of  Theorem \ref{mainthm}. Combining the proposition with the  inequality (\ref{eq:mainproof-1a}) in the proof of 
Theorem \ref{mainthm}, we can immediately deduce that for each $s>0$, 
\[
\sum_{g\in\Gamma}\e^{-sd\left(0,g\left(0\right)\right)}\le\max_{h\in F}\e^{sd\left(0,h\left(0\right)\right)/2}\sum_{\rho\in\hat{\Gamma}}\e^{-sd\left(0,\rho\left(0\right)\right)/2},
\]
which is analogous to the inequality in (\ref{eq:mainproof-3}) above.
We can then follow the proof of Theorem \ref{mainthm} after (\ref{eq:mainproof-3})
and obtain a proof  without having to apply Lemma \ref{lem:relativepoincare}
and Lemma \ref{lem:finite-to-one}.
\begin{proof}
[Proof of Proposition \ref{prop:one-to-one-using-malnormalsubgroups}]First, 
we consider the case that the group $\Gamma=\left\langle \gamma_{i}:i\in I\right\rangle $
with $\card\left(I\right)>1$ is free. Fix some arbitrary $h_0\in \hat{\Gamma}\setminus\left\{ 1\right\} $ and set $F:=\left\{ \gamma_{1}^{\pm1}h_0 \gamma_{1}^{\mp1} ,\gamma_{2}^{\pm1}h_0 \gamma_{2}^{\mp1}\right\} $. 
Let $h_{0}$ be given by the reduced word $\gamma_{\omega_{1}}^{\epsilon_{1}}\cdot\dots\cdot\gamma_{\omega_{l}}^{\epsilon_{l}}$,
 where  $l\in\N$, $\omega\in I^{l}$ and $\epsilon\in\left\{ \pm1\right\} ^{l}$.
We  define $\tau:\Gamma\rightarrow F$ as follows. For $g\in\Gamma$, 
given by the reduced word $\gamma_{\eta_{1}}^{\rho_{1}}\cdot\dots\cdot\gamma_{\eta_{m}}^{\rho_{m}}$, 
where $m\in \N$,  $\eta\in I^{m}$ and $\rho\in\left\{ \pm1\right\} ^{m}$, we choose $\alpha \in \left\{ \gamma_{1}^{\pm1} ,\gamma_{2}^{\pm1}\right\} \setminus\left\{ \gamma_{\eta_{m}}^{-\rho_{m}},\gamma_{\omega_{1}}^{-\epsilon_{1}},\gamma_{\omega_{l}}^{\epsilon_{l}}\right\}   $ and we set 
$\tau(g):=\alpha h_0 \alpha^{-1}\in F$.  
Then, the element  $\iota\left(g\right)=g\alpha h_0 \alpha^{-1}g^{-1} \in\hat{\Gamma}$ is   given by
the reduced word $\gamma_{\eta_{1}}^{\rho_{1}}\cdot\dots\cdot\gamma_{\eta_{m}}^{\rho_{m}} \alpha  \gamma_{\omega_{1}}^{\epsilon_{1}}\dots\gamma_{\omega_{l}}^{\epsilon_{l}}\alpha^{-1}\gamma_{\eta_{m}}^{-\rho_{m}}\cdot\dots\cdot\gamma_{\eta_{1}}^{-\rho_{1}}$, 
which proves that  $\iota$ is one-to-one.

Secondly, we consider the case that $\hat{\Gamma}$ contains a free
subgroup $H=\left\langle h_{1},h_{2}\right\rangle $ of rank two which
is a malnormal subgroup of $\Gamma$. Consider the left coset decomposition
of $\Gamma$ with respect to $H$, that is $\Gamma=\bigcup_{j\in J}g_{j}H$,
for some index set $J$ and a fixed choice of $\left(g_{j}\right)\in\Gamma^{J}$.
Then,  for each $g\in\Gamma$,  there exist  $j\in J$,   $l\in\N$,  words
$\omega\in\left\{ 1,2\right\} ^{l}$ and $\epsilon\in\left\{ \pm1\right\} ^{l}$, 
such that $g=g_{j}h_{\omega_{1}}^{\epsilon_{1}}\cdot\dots\cdot h_{\omega_{l}}^{\epsilon_{l}}$
and $h_{\omega_{1}}^{\epsilon_{1}}\cdot\dots\cdot h_{\omega_{l}}^{\epsilon_{l}}$
is reduced. We then set $F:=\left\{ h_{1},h_{2}\right\} $ and define
$\tau:\Gamma\rightarrow F$ as follows. If $\omega_{l}=1$ then we
set $\tau\left(g\right):=h_{2}$, otherwise we set $\tau\left(g\right):=h_{1}$.
We now verify that  $\iota:\Gamma\rightarrow\hat{\Gamma}$
  is one-to-one. Let
$g,g'\in\Gamma$ with $\iota\left(g\right)=\iota\left(g'\right)$
be given. There exist indices $i,j\in J$, integers $l,m\in\N$ and
words $\omega\in I^{l}$, $\eta\in I^{m}$, $\epsilon\in\left\{ \pm1\right\} ^{l}$
and $\rho\in\left\{ \pm1\right\} ^{m}$ such that $g=g_{i}h$ and
$g'=g_{j}h'$ with reduced words $h=h_{\omega_{1}}^{\epsilon_{1}}\cdot\dots\cdot h_{\omega_{l}}^{\epsilon_{l}}$
and $h'=h_{\eta_{1}}^{\rho_{1}}\cdot\dots\cdot h_{\eta_{m}}^{\rho_{m}}$.
Note that $\iota\left(g\right)=\iota\left(g'\right)$ implies
that
\begin{equation}
g_{j}^{-1}g_{i}h\tau\left(g\right)h^{-1}g_{i}^{-1}g_{j}=h'\tau\left(g'\right)\left(h'\right)^{-1}.\label{eq:malnormal-1}
\end{equation}
Observe that by definition
of $\tau$,  we have that   $h\tau\left(g\right)h^{-1}$ and \foreignlanguage{english}{$h'\tau\left(g'\right)\left(h'\right)^{-1}$
are both elements of $H\setminus \{1\}$. Since $H$ is a malnormal subgroup of $\Gamma$, the equality
in (\ref{eq:malnormal-1}) implies that $g_{j}^{-1}g_{i}\in H$ and
thus $i=j$. } Now \foreignlanguage{english}{(\ref{eq:malnormal-1})}
implies that  $h=h'$, which gives  $g=g_{i}h=g_{i}h'=g'$.
The proof is complete.\end{proof}
Let us end this note with the following interesting open question.
\begin{problem}
\label{problem}Let $\hat{\Gamma}$ be a non-trivial normal subgroup
of a non-elementary Kleinian group $\Gamma$. Does there exist a finite
set $F\subset\hat{\Gamma}$ and a map $\tau:\Gamma\rightarrow F$
such that the map $\iota:\Gamma\rightarrow\hat{\Gamma}$, which is for each $g\in \Gamma$  given by
$\iota(g):=g\tau\left(g\right)g^{-1}$,  is one-to-one?
\end{problem}
\providecommand{\bysame}{\leavevmode\hbox to3em{\hrulefill}\thinspace}
\providecommand{\MR}{\relax\ifhmode\unskip\space\fi MR }
\providecommand{\MRhref}[2]{%
  \href{http://www.ams.org/mathscinet-getitem?mr=#1}{#2}
}
\providecommand{\href}[2]{#2}

\end{document}